\documentclass[a4paper, 10pt]{article}
\usepackage{amsthm}
\usepackage{amsmath,amssymb}
\usepackage{graphicx}

\newtheorem{theorem}{Theorem}[section]
\theoremstyle{plain}
\newtheorem{conjecture}[theorem]{Conjecture}
\newtheorem{lemma}[theorem]{Lemma}

\newtheorem{corollary}[theorem]{Corollary}
\newtheorem{observation}[theorem]{Observation}
 
\theoremstyle{definition}
\newtheorem{definition}[theorem]{Definition}
\newtheorem{example}[theorem]{Example}

\begin{document}


\title{Improved bounds for the shortness coefficient of cyclically 4-edge connected cubic graphs and snarks.}

\author{Klas Markstr\"om\thanks{Department of Mathematics and Mathematical Statistics, 
Ume\aa\  University , SE-901 87  Ume\aa, Sweden} }

\maketitle


\begin{abstract}
	We present a construction which shows that there is an infinite set of cyclically 4-edge connected cubic graphs on $n$ vertices with no cycle 
	longer than $c_4 n$ for $c_4=\frac{12}{13}$, and  at the same time prove that a certain natural family of cubic  graphs cannot be used to lower the shortness 
	coefficient $c_4$ to 0.
	
	The graphs we construct are snarks so we get the same upper bound for the shortness coefficient of snarks, and we prove that the constructed graphs have an oddness growing 
	linearly with the number of vertices.
\end{abstract}

\section{Introduction}
The length of the longest cycle, called the circumference  and here denoted $circ(G)$,  in a regular graph is a property related to a number of well studied problems and conjectures. Give an infinite family of graphs  $\mathcal{F}$, the \emph{shortness coefficient} of $\mathcal{F}$ is defined to be  $c_{\mathcal{F}}=\liminf_{G\in \mathcal{F}} \frac{circ(G)}{|G|}$, and the  \emph{shortness exponent}   $e_{\mathcal{F}}=\liminf_{G\in \mathcal{F}}  \frac{\log(circ(G))}{\log(|G|)}$.

For 2-connected cubic graphs Bondy and Entringer \cite{BE} showed that the circumference can be as low as $\mathcal{O}(\log n)$.  For 3-connected cubic graphs Bondy and Simonovits \cite{BS} proved that the shortness exponent is at most $\frac{\log 8}{\log 9}$, and Jackson \cite{J} proved that it is at least $\log_2{(1+\sqrt 5)}$.  For the general case of $k$-regular $k$-connected graphs, with $k\geq 3$, Jackson and Parsons \cite{JP} proved that the  shortness exponent is strictly between 0 and 1 for all fixed $k$.  In contrast to this stands Tutte's result \cite{tu2} that every planar 4-connected graph is hamiltonian, and the result by Thomas and Yu \cite{TY} which shows that every projective planar 4-connected graph is hamiltonian as well.

Recall that a graph $G$ is \emph{cyclically $k$-edge connected} if the deletion of fewer than $k$ edges from $G$ does not create two components which both contain at least one cycle.  The cubic graphs constructed in \cite{BS} are not cyclically 4-edge connected and Bondy, see \cite{bo}, conjectured that the shortness coefficient for cyclically 4-edge connected cubic graphs is non-zero.  A much stronger conjecture was made by Thomassen  \cite{th} who conjectured that every 4-connected line graph is hamiltonian.  For line graphs of cubic graphs this conjecture is equivalent to saying that every cyclically 4-edge connected cubic graph has a dominating cycle, in the sense that every edge has at least one endpoint on the cycle.  The latter property had been conjectured separately by Ash and Jackson \cite{AJ}, and for the subclass of non-3-edge colourable cyclically 4-edge connected cubic graphs by Fleischner \cite{Fl}.

These conjectures would imply that the shortness coefficient for cyclically 4-edge cubic graphs is at least $\frac{3}{4}$. For planar cubic graphs this is known to hold by a theorem of Tutte \cite{tu2}. We refer the reader to \cite{BRV} for a survey of these conjectures and their relationship to several other, in many cases equivalent, conjectures.

Going in the other direction, i.e.\ bounding the circumference from above,  several authors have proven that different classes of planar cyclically 4-edge connected cubic graphs have shortness coefficient less than 1, e.g. \cite{p1,p2}.  The lowest explicit such bound we are aware of is that from  \cite{Z} which gives a class with shortness 
coefficient $\frac{85}{86}=0.9883\ldots$. The construction given in \cite{Hagg} shows that $c_4\leq \frac{14}{15}=0.933\ldots$, although this is not explicitly stated in these terms there. Letting $c_k$ denote the shortness coefficient for cyclically $k$-edge connected cubic graphs we hence know that $c_3=0$, $c_4\leq \frac{14}{15}$, and the results of \cite{p2} implies that $c_5\leq \frac{971}{972}$.

The purpose of this paper is to improve the upper bound on $c_4$. In the next section we discuss the circumference for  graphs given by a simple construction and in the following section we give two explicit examples which demonstrate that $c_4 \leq \frac{17}{18}=0.9444\ldots$ and $c_4 \leq \frac{24}{26}=0.92307\ldots$ respectively.

The graphs in our example families are also snarks, i.e.\ cyclically 4-edge connected cubic graphs of girth at least 5 which are not 3-edge colourable. One measure of the degree of non-colourability for a cubic graph is its oddness, i.e.\ the minimum number of odd cycles in any 2-factor of the graph.  All snarks on $n\leq 36$ vertices have oddness 2 \cite{BGHM} but it is known that there are families of snarks with an oddness which is linear in the number of vertices $n$. In \cite{Hagg} families of snarks with oddness at least $\frac{n}{15}$ were constructed, for certain sequences of $n$ and in \cite{LMS} this was improved to $\frac{n}{13}$.  Our second construction matches this  bound and we point out a second way of constructing snarks with the same oddness growth.

In the last section we discuss a property related to the dominating cycle conjecture and we conjecture that our bounds can be improved to show that in fact $c_4=0$.

\section{Graph substitutions and long cycles}

The following general type of construction has been used to construct graph families with various desired properties, in particular the construction in \cite{BS} showing that the shortness exponent of 3-edge connected cubic graphs is less than 1 was of this form.
\begin{definition}\label{contr}
	A graph $G$ is said to be a substitution of a graph $H$ into a multigraph $F$ if there is a partition of $V(G)$ into sets $\{V_i\}_{i=1}^{n_2}$, where $n_2=|V(F)|$ such that
	\begin{enumerate}		
		\item  The multigraph obtained by contracting each $V_i$ to a single vertex $v_i$ is isomorphic to $F$.
		
		\item  There is an edge $e\in E(H)$ such that  each induced graph $H_i=G([V_i])$ is isomorphic to the graph obtained by deleting the vertices of $e$ from $H$.
	\end{enumerate}
	
	We denote any substitution of  $H$ into $F$ using a specified edge $e$ of $H$ by $S(H,F,e)$
\end{definition}
Note that given $F$, $H$, and $e$ there are typically many non-isomorphic graphs of the form $S(H,F,e)$, since given neighbouring vertices $v_i$ and $v_j$ we have not specified which vertices in $H_i$ and $H_j$ the corresponding edge connect.

The following lemma is easy to prove
\begin{lemma}\label{lem1}
	If $H$ is cubic and cyclically $k$-edge connected and $F$ is $k$-regular and $k$-edge connected then $S(H,F,e)$ is a cyclically $k$-edge connected cubic graph.
\end{lemma}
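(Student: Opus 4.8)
The plan is to treat the two assertions separately. That $G:=S(H,F,e)$ is cubic is immediate from Definition~\ref{contr}: writing $e=xy$, in each part $H_i\cong H-\{x,y\}$ every vertex keeps degree $3$ except the four neighbours of $x,y$ distinct from $x,y$ (pairwise distinct, as $H$ has girth at least $4$), which have degree $2$ and, by clause~(1) together with $k$-regularity of $F$, each receive exactly one edge to another part. The substance is cyclic $4$-edge connectivity — the only relevant value of $k$, since deleting the two endpoints of an edge of a cubic graph leaves exactly four vertices of degree $2$, forcing $F$ to be $4$-regular. I would first record two facts. $H$ is $3$-edge connected: a bridge or $2$-edge cut of $H$ would split it into two parts each of which, being connected with at most two vertices of degree $2$ and the rest of degree $3$, has at least as many edges as vertices and hence a cycle, contradicting cyclic $4$-edge connectivity. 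And $G$ is $3$-edge connected: after deleting at most two edges, the inter-part ones among them leave $F$ connected ($2<4\le\lambda(F)$), and inside each gadget every component of $H_i$ minus the deleted edges still contains one of the four degree-$2$ vertices — otherwise that component would be the small side of an edge cut of $H$ of size $\le 2$, which we have just excluded — so it stays joined to the rest of $G$. Hence a cyclic edge cut of $G$ with fewer than $4$ edges has exactly three.

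So let $T$ with $|T|=3$ be such a cut; taking it minimal I may assume $T=E_G(A,B)$ for a bipartition $V(G)=A\sqcup B$ with $G[A],G[B]$ connected and each containing a cycle. I will also use the \emph{handshake observation}: in a cubic graph, a vertex set $W$ with $|W|\ge 2$ and $|\partial(W)|<4$ induces a subgraph with a cycle, since otherwise $3|W|=2|E(W)|+|\partial(W)|\le 2(|W|-1)+3$ forces $|W|\le 1$. If no part is split by $(A,B)$, then every edge of $T$ joins two distinct parts, so contracting the parts turns $T$ into a non-trivial edge cut of $F$, necessarily with at least $4$ edges — a contradiction. Hence some part $V_i$ is split.

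The target is then a cyclic edge cut of $H$ with at most three edges, which I would obtain as follows: find a cyclic vertex set $Y$ lying inside a single part, equal to a union of components of $H_i-(T\cap E(H_i))$, with at most three edges of $G$ leaving it; identifying $V_i$ with $V(H)\setminus\{x,y\}$ via the isomorphism $H_i\cong H-\{x,y\}$, the cut $\partial_H(Y)$ has exactly the same number of edges as $\partial_G(Y)$ (each degree-$2$ vertex of $H_i$ lying in $Y$ trades its inter-part edge for its edge to $\{x,y\}$), so $|\partial_H(Y)|\le 3$, while $V(H)\setminus Y\supseteq\{x,y\}$ has at least two vertices and so, by the handshake observation, induces a cycle — the forbidden cyclic edge cut of $H$.

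Producing such a $Y$ is easy whenever one of the cyclic components of $G-T$ already lies inside a single part: take the component of $H_i-(T\cap E(H_i))$ inside it that carries a cycle, and note that all of that set's $G$-edges to the outside lie in $T$. The remaining situations — both cyclic sides spreading over several parts — are finite in number because $|T|=3$, and the idea is that each reduces to one of the two mechanisms: either a cyclic side, or a gadget-component of it, can after all be cut off inside one part (possibly after re-distributing a ``leaf'' gadget-component of the spread-out side across the cut so as to shrink it), or else no inter-part edge joins an $A$-part to a $B$-part, whereupon the split part becomes a cut vertex of $F$ — impossible, since a cut vertex of a $4$-edge-connected graph is separated on its smaller side by at most two edges. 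The conceptual input is only the handshake observation and these two reductions (to an edge cut or cut vertex of $F$, or to a cyclic edge cut of $H$); the main obstacle, and where almost all the work sits, is the routine but fiddly verification that every arrangement of a three-edge cyclic cut over the parts really does fall into one of them.
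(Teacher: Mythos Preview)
The paper does not prove this lemma; it merely prefaces it with ``The following lemma is easy to prove'' and moves on. Your sketch is therefore already far more detailed than anything the paper supplies, and there is no paper argument to compare against.

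The approach you take---reducing a hypothetical small cyclic edge cut of $G$ either to a small edge cut (or cut vertex) of $F$, or to a small cyclic edge cut of $H$ via the translation $|\partial_G(Y)|=|\partial_H(Y)|$ for $Y\subseteq V_i$---is the natural direct one. Your auxiliary claims (that only $k=4$ is non-vacuous for this construction, that $H$ and hence $G$ are $3$-edge connected, the handshake inequality, and that a $4$-regular $4$-edge connected $F$ has no cut vertex) are all correct; in the $3$-edge connectivity argument for $G$ you implicitly use that a component of $H_i$ minus one edge carries at least two of the four degree-$2$ vertices, which follows from $\lambda(H)\ge 3$ just as in your stated case, so the one possibly deleted inter-part edge cannot isolate it. The deferred case analysis when both sides of the cut spread over several gadgets is, as you say, finite and routine: with $|T|=3$ at most three gadgets are split, and tracking how the four degree-$2$ vertices of each split $H_i$ distribute between the two sides forces one of your two reductions to apply. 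Nothing in your plan is mistaken.
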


Next we note that under the contraction used in point 1 of Definition \ref{contr}  a cycle $C$ in $S(H,F,e)$  will be transformed into an eulerian subgraph $T$ of $F$.  We can also go in the other direction, as we will now prove for the case $k=4$.
\begin{lemma}
 	Let $G$  be given by a substitution as in Lemma \ref{lem1}, with $k=4$,  and $T$ be an eulerian subgraph of $F$. Then there exists a cycle in $G$ such that the contraction of the $V_i$ in $G$ will turn $C$ into an eulerian trail of $T$.
\end{lemma}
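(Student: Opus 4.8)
The plan is to invert the observation preceding the statement. Since $T$ is an eulerian subgraph of $F$ it is connected with all degrees even, and because $F$ is $4$-regular every $v_i$ has $\deg_T(v_i)\in\{0,2,4\}$; fix an Euler circuit $W$ of $T$. At each $v_i$ with $\deg_T(v_i)>0$ the circuit $W$ groups the $T$-edges at $v_i$ into $\deg_T(v_i)/2$ \emph{transitions}, and under the substitution each such $T$-edge is a genuine edge of $G$ attached to a prescribed one of the four ``link vertices'' of the piece $H_i$ (a copy of $H$ with the two endpoints of $e$ deleted). Thus realizing $W$ as a cycle of $G$ is the same as finding, inside each $H_i$, a system of pairwise vertex-disjoint paths joining its link vertices exactly as the transitions at $v_i$ demand; call a pairing of the link vertices of $H_i$ \emph{realizable} if such disjoint paths exist. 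The proof then has two ingredients: a linkage statement about the pieces $H_i$, and a combinatorial argument adjusting the Euler circuit so that every demanded transition is realizable while $W$ stays a single circuit.

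The linkage ingredient I would state as: at a link-vertex set of size $2$ the unique pairing is realizable, and at a link-vertex set of size $4$ at least two of the three pairings are realizable. The size-$2$ case is immediate, since a cyclically $4$-edge connected cubic graph is $3$-connected, so deleting the two endpoints of $e$ leaves a connected graph in which any two link vertices are joined by a path. For the size-$4$ case write $\{a,b\}=N_H(x)\setminus\{y\}$ and $\{c,d\}=N_H(y)\setminus\{x\}$, where $e=xy$; these four vertices are distinct, since a triangle in a cubic graph is cut off by three edges and cyclic $4$-edge connectivity then forces the graph to be $K_4$. One pairing is essentially free: $H-e$ is $2$-connected and $x,y$ have degree $2$ in it, so by Menger there are two internally disjoint $x$--$y$ paths; deleting $x$ and $y$ leaves two vertex-disjoint paths of $H_i$ that between them meet all of $a,b,c,d$ and pair an $x$-neighbour with a $y$-neighbour, i.e.\ realize $\{ac,bd\}$ or $\{ad,bc\}$. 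For a second realizable pairing I would invoke the two-disjoint-paths theorem (Shiloach, Seymour, Thomassen): the only obstruction to a prescribed $2$-linkage is a planar-type one living on a disc, reachable after reductions along cuts of size at most $3$. Cyclic $4$-edge connectivity controls precisely such small cuts; indeed one checks that in $H_i$ the minimum edge cut separating $\{a,b\}$ from $\{c,d\}$ has size at least $3$ (a smaller one would, together with $e$, give a $3$-edge cut of $H$ separating two cycles, by a short degree count on the two sides), and one uses this to rule out the simultaneous obstruction of two of the three pairings.

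Granting the linkage ingredient, I finish with the combinatorial argument. Process the degree-$4$ vertices of $T$ one at a time, maintaining a family of closed trails covering $E(T)$ (initially the single circuit $W$) whose transitions at the already-processed vertices are realizable. On reaching $v_i$: if its current transition is realizable, keep it; otherwise, being the unrealizable one among the three, the two alternatives are both realizable. If $v_i$ currently lies on two distinct trails, either alternative merges them, lowering the trail count; if $v_i$ lies on a single trail (hence visited twice), one alternative is a segment reversal that keeps it a single trail and leaves all other transitions unchanged, while the other splits it, so choose the reversal. Since reversals and merges change no transition except at the vertex being processed, earlier choices stay valid and the trail count never rises, so after the last degree-$4$ vertex we are left with one Euler circuit all of whose transitions are realizable. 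Expanding it — each $T$-edge as the corresponding edge of $G$, each $v_i$ as the disjoint paths witnessing its transition in $H_i$ — yields a $2$-regular connected subgraph of $G$, that is a cycle $C$, and contracting the $V_i$ recovers precisely that Euler trail of $T$.

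I expect the size-$4$ linkage statement — showing that cyclic $4$-edge connectivity really does prevent two of the three pairings from being obstructed at once — to be the main obstacle; the combinatorial juggling of the Euler circuit and the final assembly are essentially bookkeeping.
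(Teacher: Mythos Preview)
Your outline is correct and the segment-reversal procedure for adjusting the Euler circuit is valid, but you are making the linkage step much harder than it needs to be, and the paper's route is genuinely simpler.  The paper avoids the two-disjoint-paths theorem altogether: it simply invokes Menger's theorem to say that for \emph{any} bipartition $\{u_1,u_2\}\mid\{u_3,u_4\}$ of the four link vertices there exist two vertex-disjoint paths in $H_i$ crossing it (this amounts to $H_i$ having no one-vertex separator of the two pairs, which a short edge-cut count from cyclic $4$-edge connectivity and triangle-freeness of $H$ gives).  Running this over all three bipartitions forces at least two of the three pairings to be realizable --- precisely your target, reached with no appeal to Seymour--Shiloach--Thomassen and no residual ``main obstacle''.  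For the second half, your hands-on reversal argument is essentially a proof of the degree-$4$ case of Kotzig's theorem on closed Euler trails with forbidden transitions, and the paper just cites Kotzig: at each degree-$4$ vertex declare the (at most one) unrealizable pairing to be the forbidden transition system, note that this partitions the four incident edges into classes of size at most $2=\deg/2$, and Kotzig hands you a compatible closed Euler trail directly.  So your approach is sound but reinvents two classical tools; Menger plus Kotzig is shorter and dissolves exactly the step you flagged as difficult.
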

\begin{proof}
	If $v_i$ has degree 2 in $T$ we let $u_1$ and $u_2$ denote the two vertices in $V_i$ which are neghbours of vertices outside $V_i$ in $T$. Since $H$ is 4-connected we can find a longest path $P_i$ from $u_1$ to $u_2$ in $H_i$ of length $l_i$.
	
	If $v_i$ has degree 4 in $T$ we let $u_1,\ldots,u_4$ be the vertices in $V_i$ with neighbours outside $V_i$ in $T$.  Given a partition $\{u_i,u_k\},\{u_l,u_m\}$ we 
	can always, by Menger's Theorem, find two disjoint paths $P_{i,1}$ and $P_{i,2}$, of length $l_{i,1}$ and $l_{i,2}$, in $H_i$ such that both have one endpoint in the  first part of the partition 
	and one in the second part.  
	
	 Recall that a transition in an eulerian trail is a pair of consecutive edges in the trail. The observations above mean that at vertex a $v_i$ of $T$ there is always at least 1 and 4  allowed transitions 
	  through $v_i$, depending on the degree, and by Kotzig's  theorem on eulerian trails with forbidden transitions \cite{Ko} there is an 
	 eulerian trail in $T$ which is compatible with these allowed transitions.  In $G$ this trail corresponds to a cycle with length equal to the sum of the lengths of the paths inside the $V_i$s.
	 
\end{proof}
This lemma can be generalized to larger even values of $k$ in a straight forward way, now deleting more than two vertices from $H$.  If $k$ is odd, and at least 5, we can use a theorem of Catlin, see \cite{cat}, together with \cite{Ja} to conclude that $G$ has a spanning eulerian subgraph and then use the same arguments as for a uniform even $k$ on this subgraph.

From these lemmas we see that we can always find a cycle in $S(H,F,e)$ which contains vertices from each $V_i$, and the maximum possible length of such a cycle will primarily depend on the properties of $H$, rather than those of $F$.  If $F$ is hamiltonian we could take $T$ to be a Hamiltonian cycle, and hence only use a single path in each $V_i$.  From \cite{JP} we know that for each $k\geq 3$ there are $k$-regular $k$-edge connected graphs on $n$ vertices with circumference less than $n^{c_k}$, where $c_k<1$, however if $F$ is of this type we will be able to use a general eulerian subgraph $T$ instead of a cycle. As a corollary we get,
\begin{corollary}
	For any fixed $H$ and sequence of  graphs $F_n$ the shortness coefficient for the class of cyclically 4-edge connected substitutions $S(H,F_n,e)$ is strictly greater than 0.
\end{corollary}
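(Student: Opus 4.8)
The plan is to run everything through the two lemmas above and then simply count vertices. Write $G_n := S(H,F_n,e)$ and $n_2 := |V(F_n)|$. The key point is that $|V(G_n)|$ is a \emph{fixed} multiple of $n_2$, while $circ(G_n)$ is \emph{at least} a fixed multiple of $n_2$, with both constants depending only on the fixed graph $H$; their quotient is then a positive lower bound for $circ(G_n)/|V(G_n)|$ uniform in $n$, and hence also for the liminf defining the shortness coefficient.

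For the size: by part~2 of Definition~\ref{contr} every block $H_i$ is isomorphic to $H$ with the two endpoints of $e$ deleted, so $|V_i| = h$ for all $i$, where $h := |V(H)|-2$; since the $V_i$ partition $V(G_n)$ this gives $|V(G_n)| = h\,n_2$. (We may assume $|V(H)|\ge 4$, so $h\ge 2$, and that $H$ has girth at least $4$ — equivalently, since $H$ is cubic and cyclically $4$-edge connected, that $H\neq K_4$ — so that the four attachment vertices of each $H_i$ are pairwise distinct; this covers every $H$ used in the constructions of interest.) By Lemma~\ref{lem1} each $G_n$ is indeed a cyclically $4$-edge connected cubic graph, so the family is well defined. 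For the long cycle: since $F_n$ is connected and $4$-regular it is a spanning eulerian subgraph of itself, so I would apply the second lemma above (with $k=4$) to $T := F_n$. Every vertex of $T$ has degree $4$, so the resulting cycle $C_n$ enters each block $V_i$ along two internally disjoint paths $P_{i,1},P_{i,2}$ of lengths $l_{i,1},l_{i,2}$, and $circ(G_n)\ge|C_n|\ge\sum_{i=1}^{n_2}(l_{i,1}+l_{i,2})$. Because the four attachment vertices of $H_i$ are distinct, each $P_{i,j}$ joins two distinct vertices and hence has length at least $1$; so $l_{i,1}+l_{i,2}\ge 2$ for every $i$, giving $circ(G_n)\ge 2n_2$.

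Combining the two estimates, $circ(G_n)/|V(G_n)|\ge 2n_2/(h\,n_2)=2/h$ for all $n$, so the shortness coefficient of the class $\{S(H,F_n,e)\}$ is at least $2/h>0$. The step carrying all the weight is the uniform per-block lower bound, and this is exactly where the hypothesis that $H$ is one fixed graph is essential: it forces both the block size and the guaranteed internal path length to be constants independent of $n$, so no choice of the $F_n$ — for instance the circumference-deficient $k$-regular $k$-edge connected graphs of \cite{JP} — can drive the ratio to $0$. I expect the only real technical nuisance, rather than any conceptual obstacle, to be the handful of small degenerate $H$ excluded above, where attachment vertices of a block may coincide and the second lemma's transition argument and the quantitative bound must be adjusted by hand (or those $H$ simply set aside, since they play no role in the constructions that follow).
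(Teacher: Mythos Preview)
Your argument is correct and follows exactly the route the paper has in mind: the corollary is stated immediately after the discussion that one may take $T$ to be a spanning eulerian subgraph of $F_n$ (here $F_n$ itself, since it is connected and $4$-regular), apply the second lemma, and observe that each of the $n_2$ blocks contributes a positive number of vertices to the resulting cycle while $|V(G_n)|=h\,n_2$. Your explicit bound $2/h$ (and the caveat about $H=K_4$) simply makes quantitative what the paper leaves as an immediate consequence of the lemmas.
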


\section{A new upper bound for the shortness coefficient of cyclically 4-edge connected cubic graphs.}
We will here give two families of cyclically 4-edge connected cubic graphs which give improved upper bounds on the shortness coefficient $c_k$.
We let $(F_i)_{i=1}^{\infty}$ be a sequence of 4-regular 4-edge connected graphs.

In each of these examples we will need some facts regarding the maximum length of a cycle either including or excluding certain vertices or edges. These facts can be ascertained by hand, however this analysis is lengthy and not particularly enlightening so here we instead state these facts based on computational tests done using Mathematica. This was  done by finding all cycles in these graphs and then testing the stated properties.

\begin{example}
	Let $H^1$ be the graph in Figure \ref{fig1},  $e$ the edge marked in the figure,  and $G^1_n=S(H^1,F_n,e)$.  We will prove that $c_{\{S(H^1,F_n,e)\}} \leq \frac{17}{18}=0.9444\ldots$

	\begin{figure}
	\begin{center}
	    \includegraphics[width=0.5\textwidth]{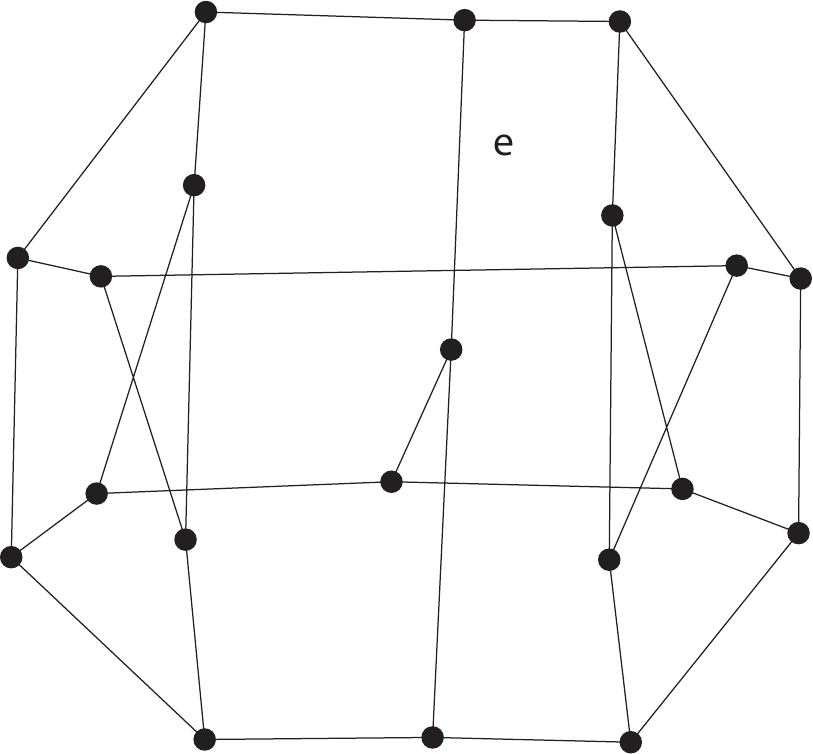}
	    \caption{The graph $H^1$}
    	\end{center}
	\end{figure}\label{fig1}
  
  	Let us consider what the restriction of a cycle in $S(H^1,F_n,e)$ to an $H_i$ can look like.	
	\begin{enumerate}
		\item  
		The first possibility is that the restriction is a single path. This corresponds to two different possibilities in $H^1$
		\begin{enumerate}
			\item  A cycle including the edge $e$.  For $H^1$ this cycle has at most 19 vertices.
		
			\item A cycle using one endpoint of $e$ but not the other. For $H^1$ this cycle has at most 18 vertices.
		\end{enumerate}

		\item  
  		The second possibility is that the restriction is two paths. This corresponds to two different possibilities in $H^1$
		\begin{enumerate}
			\item  A cycle including both endpoints of $e$ but not the edge $e$. For $H^1$ this cycle has at most 19 vertices.
		
			\item Two disjoint cycles each using one endpoint of $e$. For $H^1$ these cycles together have at most 18 vertices.
		\end{enumerate}
	\end{enumerate}
  
	From this we conclude that $G^1_n$ will have $18|F_n|$ vertices and the longest cycle in $G_n$ will have at most $17|F_n|$ vertices, hence giving us the 
	claimed bound for the shortness coefficient.    
\end{example}

\begin{example}
	Let $H^2$ be the graph in Figure \ref{fig2}, let $e$ be the edge marked in the figure, and $G^2_n=S(H^2,F_n,e)$.  We now prove that $c_{\{S(H^2,F_n,e)\}} \leq \frac{24}{26}=0.92307\ldots$

         \begin{figure}
         	\begin{center}
	    \includegraphics[width=0.5\textwidth]{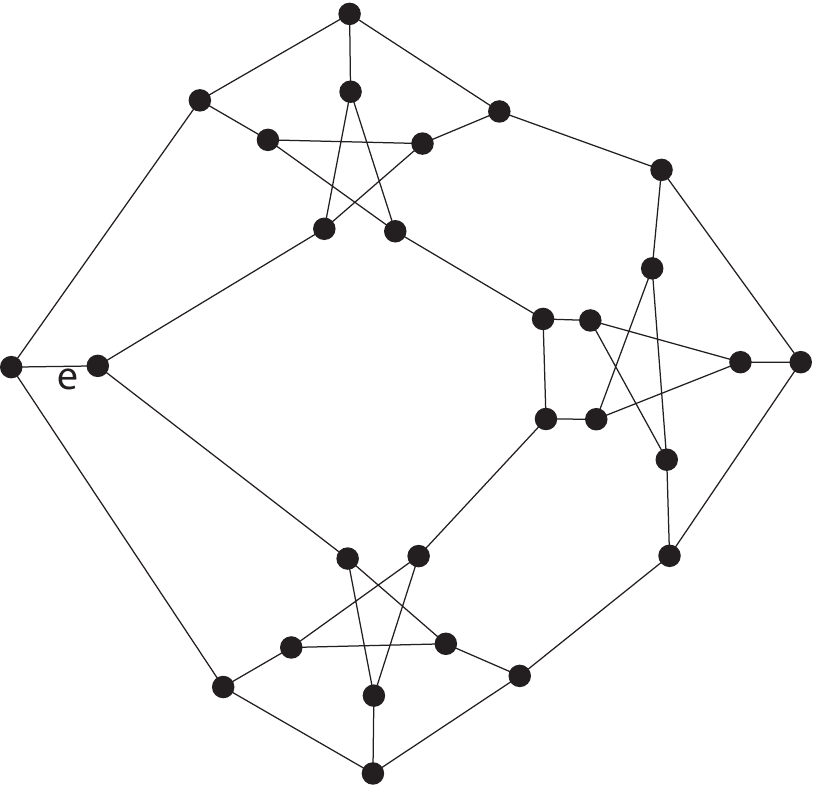}
	    \caption{The graph $H^2$}
        	\end{center}
	\end{figure}\label{fig2}

	Enumerating the way the intersection between a cycle and $H_i$ as before we get the following lengths 
	\begin{enumerate}
		\item  
		\begin{enumerate}
			\item  For $H^2$ this cycle has at most 26 vertices.
		
			\item For $H^2$ this cycle has at most 25 vertices.
		\end{enumerate}
		\item  
		\begin{enumerate}
			\item For $H^2$ this cycle has at most 26 vertices.
		
			\item  For $H^2$ these cycles together have at most 25 vertices.
		\end{enumerate}
	\end{enumerate}

	From this we conclude that $G^2_n$ will have $26|F_n|$ vertices and the longest cycle in $G^2_n$ will have at most $24|F_n|$ vertices, hence giving us the 
	claimed bound for the shortness coefficient.
\end{example}

\subsection{Snarks and the growth of Oddness}
The graph $H^1$ is a snark and every 2-factor of $H^1$ contains an odd cycle which includes no vertex from $e$.  Hence every 2-factor of $G^1_n$ will have 
at least $|F_n|$ odd cycles. This means that $G^1_n$ is also a snark and has oddness at least $|F_n|$. In fact since the oddness is an even number 
the oddness will be at least  $|F_n|+1$ if  $|F_n|$ is odd. Thus the bound for the shortness coefficient applies to the snarks as well. 

The graph $H^2$ is also a snark and every 2-factor of $H^2$ contains two odd cycles which include no vertex from $e$.  Hence every 2-factor of $G^2_n$ will have 
at least $2|F_n|$ odd cycles. This means that $G^2_n$ is also a snark and has oddness at least $2|F_n|$.

Constructing snarks with a high oddness is an interesting problem in itself. See \cite{LMS} for the best current bounds on ratio between the oddness and number of 
vertices for snarks of different cyclic connectivities.   We note that apart from the 28 vertex snark $H^2$ there is  a second snark $H^3$ on 28 vertices, shown in Figure \ref{fig3}, which has an edge 
$e$, marked in the figure, such that every 2-factor has two odd cycles which include no vertex from $e$. Hence this snark can be used in a substitution construction to get a second family of 
snarks with the same oddness ratio as $S(H^2,F_n,e)$. However, for this snark the substitution construction does not yield graphs with small circumference.

         \begin{figure}
                  	\begin{center}
	    \includegraphics[width=0.5\textwidth]{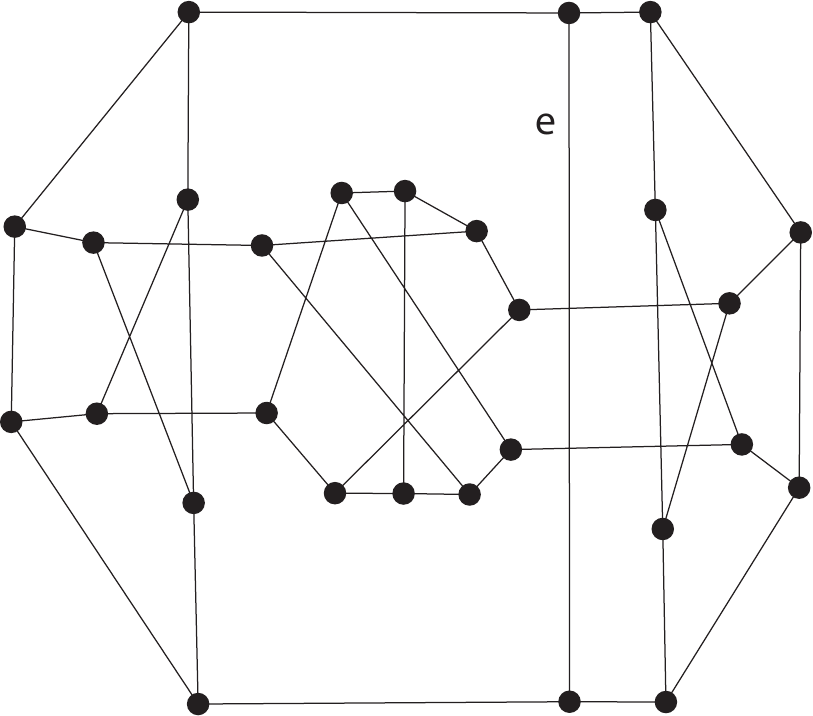}
	    \caption{The graph $H^3$}
	        	\end{center}
	\end{figure}\label{fig3}

The families $G^1_n$, $G^2_n$ and $G^3_n$ all contain a large number of graphs, the growth rates for the number of such graphs on $N$ vertices will be closely related to that of the number of 4-regular graphs on $N/18$ and $N/26$ vertices respectively.  This means that  even though all snarks on $N\leq 36$ vertices have oddness 2, see \cite{BGHM}, this will change drastically for larger $N$, with many snarks of larger oddness and a wide spread of oddness values for each $N$.

\section{A conjecture}
As we have seen the cycle structure of certain cubic graphs becomes more restricted as the number of vertices grows. In both $H^1$ and $H^2$ there are edges which do not belong to any cycle of the same length as the circumference of the graph, and cycles using both endpoints of these edges are also shorter than the circumference.  As both $H^2$ and $H^3$ demonstrate there are also snarks with edges which do not participate in any of the odd cycles of 2-factors with a minimal number of odd cycles. With regard to the dominating cycle conjectures mentioned in the introduction we believe that this more rigid behaviour is cause for concern. 

In \cite{FK} Fleischner and Kochol proved that the dominating cycle conjecture for cyclically 4-edge connected graphs is equivalent to the statement that there is a dominating cycle containing any specified pair of edges from the graph.  Let us consider the following question: Given a matching of size $k$ in $G$, is there a dominating cycle containing the matching? For $k=4$ the answer is negative for the 8 vertex M{\"o}bius ladder, which is cyclically 4-edge connected.   By using Mathematica to find all dominating cycles in the small cyclically 4-edge connected graphs ( the graphs can be obtained from e.g the website of \cite{HOG}) we can make the following observation. Recall that a weak snark is a cyclically 4-edge connected cubic graph which is not 3-edge colourable.
\begin{observation}
	\begin{enumerate}
		\item Given a matching $M$ of size 3 in any cyclically 4-edge connected  cubic graph on $n\leq 22$ vertices there is a  dominating cycle containing $M$. 
		
		\item Given a matching $M$ of size 4 in any weak snark on $n\leq 22$ vertices there is a  dominating cycle containing $M$. This is not true for $n=24$
		
		\item Given a matching $M$ of size 3 in any weak snark on $n\leq 30$ vertices there is a  dominating cycle containing $M$.
			
		\item Given a matching $M$ of size 4 in any snark on $n\leq 28$ vertices there is a  dominating cycle containing $M$. This is not true for $n=30$

		\item Given a matching $M$ of size 3 in any snark on $n\leq 32$ vertices there is a  dominating cycle containing $M$.

	\end{enumerate}
\end{observation}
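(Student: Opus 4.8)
The statement is purely computational, so the plan is to verify each clause by an exhaustive search over the relevant \emph{finite} families of graphs, relying on existing complete catalogues of cubic graphs and snarks.

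First I would assemble complete lists of the graph classes involved: all cyclically $4$-edge connected cubic graphs on $n\le 22$ vertices for clause (1); all weak snarks (cyclically $4$-edge connected, non-$3$-edge-colourable cubic graphs) on $n\le 22$ and on $n\le 30$ vertices for clauses (2) and (3); and all snarks (additionally of girth $\ge 5$) on $n\le 28$ and on $n\le 32$ vertices for clauses (4) and (5). These can be produced with standard cubic-graph generators (e.g.\ \texttt{genreg} or \texttt{snarkhunter}) or downloaded from the House of Graphs \cite{HOG}; cyclic $4$-edge connectivity, girth, and $3$-edge-colourability are all easy to test and can be applied as filters. Each resulting list is finite and, for these orders, of very manageable size.

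Second, for each graph $G$ in a list and each relevant matching size $k\in\{3,4\}$, I would enumerate all dominating cycles of $G$ by a backtracking search for cycles (the graphs are small enough that this terminates quickly), discarding each cycle $C$ whose edge set fails to dominate $E(G)$. A size-$k$ matching $M\subseteq E(G)$ is then \emph{good} precisely when $M$ is a subset of the edge set of some dominating cycle; so I would iterate over all size-$k$ matchings of $G$ and test this membership, or, equivalently and more efficiently, for each dominating cycle mark all of its size-$k$ sub-matchings as good and at the end check that the marked set exhausts the size-$k$ matchings of $G$. If this holds for every $G$ in the class, the corresponding positive clause follows. For the two negative assertions — a $24$-vertex weak snark and a $30$-vertex snark, each carrying a $4$-matching contained in no dominating cycle — I would instead simply exhibit the offending graph together with the witnessing matching, and confirm by the same cycle enumeration that no dominating cycle contains it; this is a self-contained certificate that needs no trust in the larger search.

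The one subtlety in the positive clauses is that they depend on the \emph{completeness} of the underlying catalogues; I would guard against this by generating the lists with two independent programs and checking that the counts match published values (for instance the snark counts of \cite{BGHM}). The main practical obstacle is the cost of cycle enumeration for the $n=30$ and $n=32$ cases, where both the number of graphs and the number of cycles per graph grow rapidly; this is handled by fixing the $k$ edges of a candidate matching first and only generating cycles that pass through all of them, which prunes the search drastically and keeps the total computation well within reach.
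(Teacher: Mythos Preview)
Your plan is correct and matches the paper's own justification: the Observation is established there by exactly this kind of exhaustive computer search, enumerating all dominating cycles (in Mathematica) over the relevant catalogues of small cyclically 4-edge connected cubic graphs obtained from \cite{HOG}, and then checking the matching conditions. Your description is in fact more detailed than the paper's, which does not spell out the cross-validation of catalogues or the negative-case certificates, but the underlying method is the same.
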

 We believe that for larger numbers of vertices the dominating cycles in each of the three graph classes will become less flexible, and  eventually fail for $k=2$ as well. As noted before, if the dominating cycle conjecture is true then $c_3\geq \frac{3}{4}$.  Jaeger has made the weaker conjecture, see \cite{FJ}, that every cyclically 4-edge connected cubic graph has a cycle $C$ such that $G\setminus C$ is a forest, which would imply that $c_4\geq \frac{1}{2}$. However, we expect that the constructions given in this paper can be significantly improved upon,
\begin{conjecture}
	$c_4=0$	
\end{conjecture}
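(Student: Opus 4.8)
The plan is to reduce the statement to a single construction and then to attack that construction head on. For a cubic cyclically $4$-edge connected graph $H$ with a distinguished edge $e$, let $H_e$ denote $H$ with the two endpoints of $e$ deleted, and let $\ell(H,e)$ be the largest number of vertices of $H_e$ that can appear in the intersection of a cycle of a substitution $S(H,F,e)$ with a single copy of $H_e$; the case analyses in the two Examples of Section~3 amount to showing that $\ell(H^1,e)=17$ and $\ell(H^2,e)=24$. By Lemma~\ref{lem1} and the eulerian-lifting lemma (Lemma 2.3), for any sequence of $4$-regular $4$-edge connected graphs $F_n$ with $|F_n|\to\infty$ the graph $S(H,F_n,e)$ is a cyclically $4$-edge connected cubic graph on $|H_e|\,|F_n|$ vertices with circumference at most $\ell(H,e)\,|F_n|$, so that $c_{\{S(H,F_n,e)\}}\le \ell(H,e)/|H_e|$. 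Thus it suffices to exhibit a sequence $(H^{(j)},e^{(j)})$ of cubic cyclically $4$-edge connected graphs --- of girth at least $5$, and with every $2$-factor containing an odd cycle missing $e^{(j)}$, if one wants the resulting families to be snarks --- such that
\[
\frac{\ell(H^{(j)},e^{(j)})}{\bigl|H^{(j)}_{e^{(j)}}\bigr|}\;\longrightarrow\;0 .
\]

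Producing such a sequence is the whole content of the conjecture, and the first remark is negative: plugging a fixed gadget repeatedly into Definition~\ref{contr} leaves the ratio $\ell/|H_e|$ unchanged, since the restriction of a cycle to a substitution is an admissible path system in every copy and, by Lemma 2.3, every combination of admissible local transitions is globally realizable --- this is precisely Corollary 2.4. What is needed is a genuinely new, non-substitution amplification step $H\mapsto H'$ with
\[
\frac{\ell(H',e')}{|H'_{e'}|}\;\le\;(1-\delta)\,\frac{\ell(H,e)}{|H_e|}
\]
for some absolute $\delta>0$; given one such step, iterating it produces the desired sequence (the corrections from the deleted edge-endpoints being negligible once the gadget sizes are made to grow), and the conjecture follows. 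I would look for $H'$ computationally, inside richly structured snark families --- dot products and Kochol-type superpositions of the Petersen graph, the flower snarks, and the cyclic gluings of \cite{Hagg,LMS} --- first for a gadget already beating the value $\tfrac{12}{13}$ attained by $H^2$, and then try to isolate the combinatorial reason it does so (a forced wasting of vertices by every long admissible path system) into a reusable operation.

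The main obstacle --- and it is a substantial one --- is that the amplification must be carried out while \emph{keeping} cyclic $4$-edge connectivity and girth $5$, and this is exactly the combination conjectured, by Bondy and in stronger forms by Ash--Jackson, Fleischner and Thomassen, to keep $\ell(H,e)/|H_e|$ bounded away from $0$: a cyclically $4$-edge connected gadget admitting no long admissible path system would, via the substitution, produce cyclically $4$-edge connected cubic graphs with circumference below every fixed multiple of $n$, in particular with no dominating cycle. Hence any proof of $c_4=0$ along these lines also refutes the dominating cycle conjecture, and essentially all of the difficulty is concentrated in producing one amplifying gadget --- the rest is the bookkeeping already done in Section~2. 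Should no clean amplification be available, the realistic fallback, obtainable from the same search, is to lower the explicit upper bound below $\tfrac{12}{13}$.
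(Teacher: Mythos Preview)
The statement is a \emph{conjecture}, and the paper does not prove it; it is offered at the end as a speculation, explicitly in tension with the dominating cycle conjecture and Jaeger's weaker forest-complement conjecture. There is therefore no paper proof to compare against.

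Your proposal is not a proof either, and you are candid about this. The reduction you give is correct as far as it goes: Corollary~2.4 does show that iterating a fixed substitution gadget cannot drive the ratio to zero, and you are right that what is needed is an amplification step $H\mapsto H'$ with $\ell(H',e')/|H'_{e'}|\le(1-\delta)\,\ell(H,e)/|H_e|$ for some absolute $\delta>0$. But the existence of even one such step is precisely the open content of the conjecture, and you supply no construction, only a list of places to search. The sentence ``given one such step, iterating it produces the desired sequence'' is where the argument stops being a proof and becomes a programme: you have not shown that the output $H'$ of your hypothetical operation is again of a form to which the operation can be reapplied, nor that $\delta$ is uniform over iterations, nor---most importantly---that any such $H'$ exists at all.

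Your final paragraph in fact identifies why this gap is not merely technical. As you note, a single amplifying gadget would already yield cyclically $4$-edge connected cubic graphs with no dominating cycle, refuting the conjectures of Ash--Jackson, Fleischner, and Thomassen simultaneously. So the ``bookkeeping already done in Section~2'' is not most of the work; the missing gadget is essentially everything. What you have written is a reasonable research plan and an honest assessment of the obstacles, but it should be labelled as such rather than as a proof proposal.
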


\providecommand{\bysame}{\leavevmode\hbox to3em{\hrulefill}\thinspace}
\providecommand{\MR}{\relax\ifhmode\unskip\space\fi MR }
\providecommand{\MRhref}[2]{%
  \href{http://www.ams.org/mathscinet-getitem?mr=#1}{#2}
}
\providecommand{\href}[2]{#2}

\end{document}